\numberwithin{equation}{section}
\newtheorem{thm}{Th\'eor\`eme}[section]
\newtheorem{prop}[thm]{Proposition}
\newtheorem{lem}[thm]{Lemme}
\newtheorem{cor}[thm]{Corollaire}
\theoremstyle{definition}
\newtheorem{defi}[thm]{D\'efinition}
\theoremstyle{remark}
\newtheorem{rem}[thm]{Remarque}
\newenvironment{pro*}[1][Proof]{{\it{#1:}} }{}
\newenvironment{pro**}[1][]{{\it{#1}} }{\hfill $\square$}
\newcommand{\bF}{{\mathbb F}}
\newcommand{\bQ}{{\mathbb Q}}
\newcommand{\bZ}{{\mathbb Z}}
\newcommand{\dA}{{\mathcal A}}
\newcommand{\dO}{{\mathcal O}}
\newcommand{\dP}{{\mathcal P}}
\newcommand{\dQ}{{\mathcal Q}}
\DeclareMathOperator{\im}{im}
\DeclareMathOperator{\Spec}{Spec}
\DeclareMathOperator{\h}{H}
\title{Sur l'existence du sch\'ema en groupes fondamental}
\author{Marco Antei}
\address{Escuela de Matematicas, CIMPA,
    Universidad de Costa Rica,
    Ciudad universitaria Rodrigo Facio Brenes,
    Costa Rica}
\email{marco.antei@ucr.ac.cr}
\author{Michel Emsalem}
\address{Laboratoire Paul Painlev\'e, U.F.R. de Math\'ematiques, Universit\'e des Sciences et des Technologies de Lille 1, 59 655 Villeneuve d'Ascq, France}
\email{emsalem@math.univ-lille1.fr}
\author{Carlo Gasbarri}
\address{IRMA, UMR 7501
 7 rue Ren\'e-Descartes
 67084 Strasbourg Cedex, France}
\email{gasbarri@math.unistra.fr}
\begin{document}


\removeabove{0.5cm}
\removebetween{0.5cm}
\removebelow{0.5cm}

\maketitle

\begin{prelims}

\DisplayAbstractInFrench

\bigskip

\DisplayKeyWordsfr

\medskip

\DisplayMSCclassfr

\bigskip

\languagesection{English}

\bigskip

\DisplayTitleInEnglish

\medskip

\DisplayAbstractInEnglish

\end{prelims}


\newpage

\setcounter{tocdepth}{1} 

\tableofcontentsfr


\section{Introduction}

\subsection{Objectifs et plan du travail}

Soit $(X,x_0)$ un sch\'ema connexe point\'e. Le groupe fondamental $\pi_1^{\acute{e}t}(X, x_0)$ de $X$ classifie les rev\^etements \'etales finis de $X$, plus pr\'ecis\'ement il classifie les torseurs sur $X$ sous des sch\'emas en groupes finis \'etales : les rev\^etements galoisiens \'etales sont en bijection avec les quotients finis de  $\pi_1^{\acute{e}t}(X, x_0)$. Comme d\'ej\`a mentionn\'e dans SGA1, dans plusieurs situations naturelles, on voudrait avoir \`a sa disposition un sch\'ema en groupes qui classifie les torseurs sur $X$ sous des sch\'emas en groupes finis et plats (et non seulement \'etales) sur une base fix\'ee $S$ . On ne sait pas si un tel sch\'ema en groupes existe toujours. Quand on sait prouver son existence, on l'appellera le {\it sch\'ema en groupes fondamental} de $X$ sur $S$ et on le notera $\pi_1(X,x_0)$.

Pour construire le sch\'ema en groupes fondamental de $X$ sur $S$, on est confront\'e au probl\`eme suivant: on consid\`ere la cat\'egorie $\mathcal{P}(X)$ des triplets $(Y,G,y_0)$ o\`u $G$ est un sch\'ema en groupes fini et plat sur $S$, $Y$ est un $G$-torseur sur $X$ et $y_0$ est un $S$-point de $Y$ au dessus de $x_0$. Si cette cat\'egorie est cofiltr\'ee, alors le sch\'ema en groupes fondamental de $X$ sur $S$ existe (et il est la limite projective des sch\'emas en groupes $G$ qui interviennent dans la d\'efinition de la cat\'egorie). Le fait que cette cat\'egorie soit cofiltr\'ee n'est pas toujours facile \`a v\'erifier.

L'existence du sch\'ema en groupes fondamental pour des sch\'emas r\'eduits et connexes sur un corps a \'et\'e \'etablie dans l'article fondateur \cite{Nor2}.  L'\'etape suivante est d'\'etudier le cas o\`u la base $S$ est un sch\'ema de Dedekind. Soit $f:X\to S$ un morphisme muni d'une section $x_0\in X(S)$. Dans cet article on d\'emontre l'existence du sch\'ema en groupes fondamental $\pi_1(X,x_0)$ quand au moins une des deux hypoth\`eses suivantes est v\'erifi\'ee:

\smallskip

(A) $X\to S$ est s\'epar\'e, fid\`element plat, localement de type fini et pour tout point $s\in S$, la fibre $X_s$ est r\'eduite.
\smallskip

(B) $X\to S$ est s\'epar\'e, fid\`element plat, localement de type fini et pour tout point $x\in X\setminus X_{\eta}$, l'anneau local $\dO _x$ est int\'egralement clos, o\`u $\eta $ d\'esigne le point g\'en\'erique de $S$.
\smallskip

En particulier le sch\'ema en groupes fondamental de $X$ sur $S$ existe si $X$ est connexe et normal. Ceci est souvent suffisant pour les applications.
\medskip

On g\'en\'eralise ensuite cette \'etude \`a la question de l'existence du groupe fondamental sch\'ematique qui classifie les torseurs sous des sch\'emas en groupes {\it quasi-finis} sur $S$. On montre qu'il existe pour des sch\'emas qui v\'erifient la propri\'et\'e suivante:

\smallskip

(C) $X\to S$ est localement de type fini, s\'epar\'e, fid\`element plat, et pour tout point $s\in S$, $X_s$ est int\`egre et normal (en particulier $X$ est int\`egre et normal, comme on le rappellera en d\'ebut de la section \ref{fondamental}).
\medskip

Le plan de ce travail est le suivant : apr\`es des pr\'eliminaires techniques, on \'etudie dans la section \ref{reduction}, pour un $G$-torseur sur $X$, \`a quelle condition l'existence d'une r\'eduction de sch\'ema en groupes structural \`a un sous-sch\'ema en groupes ferm\'e de $G$ passe d'un ouvert dense de $X$ \`a $X$ lui-m\^eme. Ce sera la clef de la preuve dans la section \ref{fondamental} du fait que sous (A) ou (B), la cat\'egorie $\mathcal{P} (X)$ est cofiltr\'ee, et de la propri\'et\'e similaire pour les torseurs sous des sch\'emas en groupes quasi-finis. Dans la section \ref{galoisien}, on introduit les notions de torseurs galoisiens et quasi-galoisiens, outils qui nous permettront, sous certaines hypoth\`eses de comparer la restriction \`a un ouvert $U$ dense dans $X$ du sch\'ema en groupes fondamental de $X$ et du sch\'ema en groupes fondamental de $U$.

Dans l'article \cite{GAS}, le troisi\`eme auteur pr\'etendait avoir prouv\'e que la cat\'egorie $\mathcal{P}(X)$ est cofiltr\'ee quand $X$ est un sch\'ema irr\'eductible, r\'eduit et fid\`element plat sur $S$. Malheureusement dans cette preuve il y a une faute : le lemme 2.2 de op. cit. est faux, comme le montre le contre-exemple de J. Tong pr\'esent\'e au paragraphe \ref{sezJIL}. La construction du sch\'ema en groupes fondamental sous ces hypoth\`eses sur $X$ s'av\`ere \^etre encore un probl\`eme ouvert.

\section*{Remerciements}
Nous tenons \`a remercier Jilong Tong, qui nous a autoris\'e \`a pr\'esenter ici son contre-exemple, d\'ecrit \`a la section \ref{sezJIL}, H\'el\`ene Esnault  pour son int\'er\^et pour notre travail au travers de nombreux \'echanges et Matthieu Romagny pour sa contribution \`a la preuve du corollaire \ref{quotient-de-torseur}.

\section{Pr\'eliminaires}
\label{Prel}

\subsection{Notations}

Dans cet article $S$ d\'esignera un sch\'ema de Dedekind, i.e. un sch\'ema localement noeth\'erien, irr\'eductible et normal de dimension 0 ou 1, dont on notera par $\eta=\Spec(K)$ le point g\'en\'erique. Partout dans le texte un morphisme de sch\'emas sera dit quasi-fini s'il est de type fini et si chaque fibre est un ensemble fini de points. On se donne un $S$-sch\'ema s\'epar\'e, localement de type fini et fid\`element plat $X\to S$. Par l'expression  \emph{torseur fini (resp. quasi-fini)} sur $X$, on entendra un torseur sur $X$ au sens $fpqc$ sous l'action d'un $S$-sch\'ema en groupes fini et plat (resp. quasi-fini, affine et plat).

\subsection{Torseurs et sections}
\label{Preliminaires}

On utilisera tout le long de ce travail l'\'enonc\'e suivant qui assure la repr\'esentativit\'e du quotient d'un sch\'ema sous l'action d'un sch\'ema en groupes, sous certaines hypoth\`eses, \'enonc\'e pour lequel nous r\'ef\'erons \`a
\cite{Ana}, th\'eor\`eme 7, appendice 1. La compatibilit\'e du quotient (par l'action d'un sch\'ema en groupes de type fini) et du changement de base est assur\'ee par \cite[Expos\'e IV, 3.4.3.1]{SGA}.

\begin{thm}\label{teoQuozReno}
Soit $T$ un sch\'ema localement noeth\'erien. Soient $Z\to T$ un $T$-sch\'ema localement de type fini et quasi-fini, et $H\to T$ un sch\'ema en groupes affine plat  et quasi fini agissant sur $Z$.  Supposons que le morphisme naturel $Z\times_T H\to Z\times_T Z$ soit une immersion ferm\'ee.

Le faisceau $(Z/H)_{fpqc}$ est alors repr\'esent\'e par un sch\'ema $Z/H$.  De plus le morphisme canonique $Z\to Z/H$ est fid\`element plat et le morphisme canonique $Z \times _T H \to Z\times _{Z/H} Z $ est un isomorphisme, ce qui fait de $Z \to Z/H$ un $H$-torseur .
\end{thm}

\begin{cor}\label{quotient-de-torseur} Soit $S$ un sch\'ema de Dedekind, $G\to S$ un $S$--sch\'ema en groupes affine, plat et quasi-fini (resp. fini) et $H\subset G$ un sous-sch\'ema en groupes ferm\'e plat sur $S$.

Soit $X \to S$ un $S$-sch\'ema s\'epar\'e, localement de type fini et fid\`element plat et $Y \to X$ un torseur sous $G$.

Alors $Y \to Y/H$ est fid\`element plat et le morphisme $Y/H \to X$ issu de la factorisation $Y \to Y/H \to X$ est affine, plat et quasi-fini (resp. fini). En particulier $Y/H \to S$ est s\'epar\'e. \end{cor}

\proof Ces propri\'et\'es sont locales pour la topologie fppf. Pour ce qui concerne la fid\`ele platitude de $Y \to Y/H$, il suffit de montrer que $G \to G/H$ est fid\`element plat, ce qui est une cons\'equence du th\'eor\`eme \ref{teoQuozReno} et du fait que le morphisme $\mu : G\times _SH \to G\times _S G$, d\'efini par $\mu (g,h)=(g, gh)$ est une immersion ferm\'ee. La platitude de $Y/H \to X$ provient de la platitude de $Y \to X$ et de la fid\`ele platitude de $Y \to Y/H$. Pour le reste de l'\'enonc\'e, il suffit donc de montrer que $G/H \to S$ est affine et quasi-fini (resp. fini).

Le morphisme $G/H \to S$ est quasi-fini : il suffit de montrer que ce morphisme est localement de type fini. Cette propri\'et\'e est locale sur la source (\cite[Lemma 34.25.2, Tag 036O]{ST}). Puisque  $G\to S$ est quasi-fini et $G \to G/H$ est fid\`element plat, le morphisme $G/H \to S$ est quasi-fini.

Le morphisme $G/H \to S$ est  s\'epar\'e : en effet si l'on note $\Delta : G/H \to G/H \times _S G/H$ la diagonale, le diagramme suivant est cart\'esien
$$\xymatrix{
G \times _S H \ar[r]^\mu \ar[d] & G\times _S G \ar[d]\\
G/H \ar[r]_\Delta & G/H \times _S G/H \\
}$$ o\`u les fl\`eches verticales sont fid\`element plates et $\mu (g,h)=(g,gh)$. Comme $\mu$ est une immersion ferm\'ee, il en est de m\^eme de $\Delta$.

Le morphisme $G/H \to S$ est quasi-compact : $G \to S$ l'est et $G \to G/H$ est surjectif.

On peut donc appliquer le th\'eor\`eme principal de Zariski (\cite{ST}, Lemma 36.38.3, Tag 05K0), qui nous assure l'existence d'un diagramme commutatif
$$\xymatrix{
G/H \ar[rd] \ar[r]^j& T \ar[d]_\pi \\
&S\\
}$$ o\`u $\pi $ est fini et $j$ est une immersion ouverte. En se restreignant \`a un ouvert affine de $S$, on peut supposer $T$ affine de dimension $1$. D'apr\`es le lemme 32.37.2 de \cite[Tag 09N9]{ST}, $G/H$ est affine.

Enfin si $G$ est fini sur $S$, quitte \`a se restreindre \`a un ouvert affine de $S$, l'alg\`ebre de $G/H$ est contenue dans l'alg\`ebre de $G$, qui est finie par hypoth\`ese, et est donc aussi finie sur $S$. Donc $G/H$ est fini sur $S$.
\endproof

\begin{defi}
Supposons que l'on soit dans les hypoth\`eses du corollaire \ref{quotient-de-torseur}.  Soient $T\to X$ et $Y\to X$ respectivement un $H$--torseur et un $G$--torseur. On dira que $T\to X$ {\it est obtenu par r\'eduction de sch\'ema en groupes structural de $G$ \`a $H$} \`a partir du $G$-torseur $Y\to X$, s'il existe un diagramme commutatif 
$$\xymatrix{
T \ar[rd] \ar[r]^\iota& Y \ar[d] \\
&X\\
}$$
qui est de plus $H \hookrightarrow G$-\'equivariant. Notons que $\iota$ est automatiquement une immersion ferm\'ee. Dans cette situation, d'apr\`es le th\'eor\`eme \ref{teoQuozReno}, le quotient $Y/H$ est repr\'esent\'e par un sch\'ema et le morphisme $Y \to X$ se factorise \`a travers $Y/H \to X$. En termes cohomologiques, le torseur $Y\to X$ est obtenu par r\'eduction de sch\'ema en groupes structural de $G$ \`a $H$ si et seulement si sa classe dans $\h ^1(X,G)$ est contenue dans l'image de l'application naturelle d'ensembles point\'es $\h ^1(X,H)\to \h ^1(X,G)$.\end{defi}

\begin{lem}[cf. \cite{DG}, chapitre III, \S 4, n${}^\circ$~4, p. 373]\label{cartesien}
Soit $Y \to X$ un $G$-torseur sous un sch\'ema en groupes affine $G$ et $H \subset G$ un sous-sch\'ema en groupes ferm\'e. Le diagramme
$$\xymatrix{Y/H \ar[r] \ar[d] & X \ar[d]\\
BH \ar[r] &BG\\}$$
est $2$-cart\'esien, o\`u $BG$ et $BH$ d\'esignent les champs classifiants des $G$-torseurs et $H$-torseurs respectivement.
\end{lem}

Dans la situation pr\'ec\'edente, si on laisse tomber l'hypoth\`ese de quasi-finitude, $Y/H$ est un espace alg\'ebrique. C'est ainsi qu'il doit \^etre compris dans le lemme \ref{cartesien}.

Nous utiliserons le lemme pr\'ec\'edent sous la forme suivante :
\begin{lem}\label{cartesien1} Dans les hypoth\`eses du corollaire \ref{quotient-de-torseur},
il existe une correspondance biunivoque, stable par changement de base, entre sections $s$ de $Y/H$ au dessus de $X$ et r\'eductions $T \to X$ du sch\'ema en groupes structural de $G$ \`a $H$ pour le $G$-torseur $Y \to X$. Cette correspondance est donn\'ee par le diagramme cart\'esien
$$\xymatrix{T \ar[r] \ar[d] & Y \ar[d] \\
X\ar[r]_s & Y/H.\\}$$
\end{lem}

Voici un corollaire dont l'int\'er\^et s'\'eclairera \`a la section \ref{sezSGF}.

\begin{cor}\label{lemLEM22}  Supposons que l'on soit sous les hypoth\`eses du corollaire \ref{quotient-de-torseur}. Soit $X' \to S$ un morphisme fid\`element plat et localement de type fini. Soit $g : X' \to X$ un $S$-morphisme tel que $\dO _X \to g_ \ast \dO _{X'}$ soit un isomorphisme. Soit $Y'\to X'$ le $G$-torseur obtenu comme tir\'e par $g$ du torseur $Y$ sur $X$.

Alors $Y'\to X'$ est obtenu par r\'eduction de sch\'ema en groupes structural de $G$ \`a $H$ si et seulement si $Y\to X$ l'est.\end{cor}

\begin{proof}  Seulement une direction n\'ecessite une preuve. On dispose du diagramme cart\'esien

$$\xymatrix{
Y'/H \ar[r] \ar[d]_{\tilde {j'}}& Y/H \ar[d]^{\tilde j}\\
X' \ar[r]_g& X\\
}$$

D'apr\`es le lemme \ref{cartesien1}, l'hypoth\`ese se traduit par l'existence d'une section $X' \to Y' /H$ du morphisme canonique $\tilde {j'}$ qui, compos\'ee avec le morphisme naturel $Y'/ H \to Y/H$ donne un morphisme $s' :X' \to Y/H$ qui s'ins\`ere dans le diagramme commutatif

$$\xymatrix{
Y'/H \ar[r] \ar[d]_{\tilde {j'}}& Y/H \ar[d]^{\tilde j}\\
X' \ar[r]_g\ar[ru]^{s'}& X\\
}.$$

Le morphisme $s'$ induit un morphisme de $\dO_X$-faisceaux $\tilde j_*(\dO_{Y/H})\to g_*(\dO_{X'})\simeq \dO_{X}$. Du fait que $\tilde j$ est affine, on obtient une section $s:X\to Y/H$ (\cite[Lemma 28.11.5, Tag. 01SA]{ST}) telle que $s\circ g= s'$ et donc d'apr\`es le lemme \ref{cartesien1} l'existence d'un $H$-torseur $Z \to X$ contenu dans le $G$-torseur $Y \to X$. \end{proof}

Le corollaire \ref{lemLEM22} a l'interpr\'etation cohomologique suivante :  sous les hypoth\`eses du corollaire, soient $g^\ast: \h ^1(X, G)\to \h ^1(X',G)$, $\alpha :\h ^1(X,H)\to \h ^1(X,G)$  et $\alpha' :\h ^1(X',H)\to \h ^1(X',G)$ les applications naturelles, alors $\im (\alpha ) = (g^\ast )^{-1} (\im (\alpha '))$.

\subsection{Un lemme technique}

\begin{lem}\label{technique} Soit $S$ un sch\'ema de Dedekind. Soient $X\to S$ et $X'\to S$ respectivement un $S$--sch\'ema localement de type fini et fid\`element plat  et un $S$-sch\'ema localement de type fini. Soit $w:X' \to X$ un $S$-morphisme. Alors $w$ est un isomorphisme sous l'une des hypoth\`eses suivantes :
\begin{enumerate}
\item $X$ et $X'$ sont int\`egres,  $w$ est fini et on suppose qu'il existe un ouvert dense $U \subset X$ tel que la restriction de $w$ \`a $U'=w^{-1} (U)$ soit un isomorphisme de $U'$ sur $U$, et $X$ est normal en tous les points de $X\setminus U$;
\item $X$ et $X'$ sont int\`egres, $w$ est fini, $X$ est normal en tout point hors de la fibre g\'en\'erique, et la restriction $w_\eta $ de $w$ \`a la fibre g\'en\'erique est un isomorphisme $X'_\eta \simeq X_\eta$ ;
\item $X'\to S$ est plat, $w$ est fini et surjectif, la restriction $w_\eta $ de $w$ \`a la fibre g\'en\'erique est un isomorphisme $X'_\eta \simeq X_\eta$ et pour tout point $s\in S$, la fibre $X_s$ est r\'eduite.
\item $X$ et $X'$ sont int\`egres, $X'\to S$ est fid\`element plat, $w$ est quasi-fini et s\'epar\'e, on suppose qu'il existe un ouvert dense $U \subset X$ tel que la restriction de $w$ \`a $U'=w^{-1} (U)$ soit un isomorphisme de $U'$ sur $U$ et, pour tout point $s \in S$, la fibre $X_s$ est normale et int\`egre, $U_s \not=\emptyset$ et $w_s$ est un morphisme fini.
\end{enumerate}
\end{lem}

\proof
\leavevmode
\begin{enumerate}

\item \label{1} Paraphrasant la preuve du lemme 8.12.10.1 de \cite{EGAIV-3}, posons $\dA = w_\ast (\dO _{X'})$. Avec cette notation $X' \simeq \Spec \dA$. Pour tout $x\in X$, $\dO _{X,x} \subset \dA _x \subset R(X)$ o\`u $R(X)$ d\'esigne le corps des fonctions de $X$. Il s'agit de montrer que pour tout $x\in X$, $\dO _{X,x} = \dA _x$. Si $x \notin U$, $\dO _{X,x} $ est int\'egralement clos et $\dA _x$ est entier sur $\dO _{X,x}$, et donc $\dO _{X,x} = \dA _x$. Si $x \in U$, $w$ \'etant un isomorphisme de $w^{-1} (U)$ sur $U$, $\dO _{X,x} = \dA _x$ encore.

\item \label{2bis} Avec les notations pr\'ec\'edentes, il s'agit encore de prouver que $\dO _{X,x} = \dA _x$ pour tout $x\in X$. On se trouve donc dans la situation du point (1) o\`u le sch\'ema de Dedekind de base est $\Spec \dO _{S,s}$, si l'on a d\'esign\'e par $s$ l'image de $x$ dans $S$, du fait que le point g\'en\'erique $\eta $ est ouvert dans $\Spec \dO _{S,s}$.

\item \label{3}   La preuve est une adaption de celle du lemme 1.3 de \cite{WWO} . Il suffit de montrer que pour tout point ferm\'e $s\in S$, la restriction de $w$ au dessus de $\Spec \dO _{S,s}$ est un isomorphisme. On est ramen\'e au cas o\`u $S$ est le spectre d'un anneau de valuation discr\`ete  $R$ avec corps de fractions $K$ et id\'eal maximal engendr\'e par $\pi$. De la surjectivit\'e de $w$ on d\'eduit que sur la fibre sp\'eciale $w _s:X'_s\to X_s$ est surjectif ; on peut supposer $X$ et $X'$ affines, $X=\Spec(A)$ et $X'=\Spec(A')$ : les hypoth\`eses impliquent qu'on a une inclusion
$A\subset A'\subset A\otimes_R K$. Comme $X_s$ est r\'eduit, on en d\'eduit que $w_s^{\#}:\dO_{X_s}\hookrightarrow {(w_s)}_*(\dO_{X'_s})$ est injectif (\cite{EGAI}, corollaire 1.2.7) donc $\pi A'\cap A= \pi A$ , ce qui implique la conclusion dans ce cas.

\item \label{4}  Le morphisme $w: X' \to X$ se factorise en $w= w'' \circ w'$, o\`u $w' : X' \to X''$ est une immersion ouverte, $X''$ est int\`egre et $w'' :X'' \to X$ est fini (\cite[Lemma 36.38.3, Zariski's main theorem, Tag 05K0]{ST}). Par ailleurs l'hypoth\`ese implique que $X$ est normal (\cite[ Theorem 23.9]{Mat} et son corollaire), et le point (1) assure alors que $w''$ est un isomorphisme. On conclut que $w$ est une immersion ouverte. Pour tout point $s\in S$, l'immersion ouverte des fibres en $s$, $U'_s \subset X'_s$ se factorise en $U'_s \subset T_s \subset X'_s$, o\`u $U'_s \subset T_s$ est un ouvert dense de $T_s$ et $j_s :T_s \to X'_s$ est une immersion ferm\'ee. Le morphisme $t_s= w_s \circ j_s$ est fini, donc  propre, et l'image $t_s(T_s)$, qui est ferm\'ee et contient l'ouvert dense $U_s$, est $X_s$ tout entier. On en d\'eduit que $w_s$ est surjectif, ceci pour tout $s\in S$, d'o\`u on conclut que $w$ est surjectif. L'immersion ouverte $w$ qui est surjective est un isomorphisme.

\end{enumerate}
\endproof

\begin{rem}\label{technique-plus} Il ressort de la preuve du point (\ref{4}) que, si l'on supprime l'hypoth\`ese que pour tout point $s\in S$, $U_s \not=\emptyset$ et $w_s$ est fini, on conclut n\'eanmoins que $w$ est une immersion ouverte.
\end{rem}

\section{R\'eduction de sch\'ema en groupes structural}\label{reduction}

Dans tout le paragraphe on consid\`ere un sch\'ema de Dedekind $S$, un $S$--sch\'ema $X\to S$ s\'epar\'e, fid\`element plat et localement de type fini, et un sch\'ema en groupes affine plat $G\to S$.

On se donne  un ouvert dense $U \subset X$ dont on note $S'$ l'image dans $S$ par le morphisme fid\`element plat $X \to S$ ($S'$ est donc un ouvert contenant le point g\'en\'erique $\eta $ de $S$) et un sous-sch\'ema en groupes ferm\'e $H' \subset G_{S'}$ plat sur $S'$.   Soit $Y \to X$ un $G$-torseur et $Y_U\to U$ sa restriction \`a $U$.  La question g\'en\'erale est la suivante.

\smallskip
Supposons que $Y_U\to U$ admette une r\'eduction de sch\'ema en groupes structural $Z'\to U$ de $G_{S'}$ \`a $H' \subset G_{S'}$ (correspondant \`a une section $s': U \to Y_U/H'$), le torseur $Y \to X$ admet-il lui-m\^eme une r\'eduction de sch\'ema en groupes structural $Z\to X$ de $G$ \`a un sous-sch\'ema en groupes ferm\'e $H\subset G$ plat sur $S$ dont la restriction \`a $U$ est pr\'ecis\'ement $Z'\to U$?\
\smallskip

Au vu du lemme \ref{cartesien}, cela se traduit par l'existence d'une section $s: X \to Y/H$ rendant commutatif le diagramme suivant :

$$\xymatrix{
Y_U/H' \ar[r] &Y/H \\
U\ar[u]^{s'} \ar[r] &X\ar[u]_s\\
}.$$
Remarquons que si $X$ est int\`egre, $U$ est un ouvert sch\'ematiquement dense dans $X$, et la section $s$, si elle existe, est unique.

Observons encore que dans le diagramme pr\'ec\'edent, $H$ est n\'ec\'essairement l'adh\'erence sch\'ematique dans $G$ de la restriction $H'_\eta $ de $H'$ \`a la fibre g\'en\'erique.

\subsection{Le cas o\`u $X \to S$ est \`a fibres r\'eduites }\label{Reduit} On suppose ici que pour tout point $s\in S$, la fibre $X_s$ est r\'eduite (en particulier $X$ est r\'eduit, cf. \cite[prop. 9.5.9]{EGAI}).

\begin{prop}\label{lem22}
On suppose que pour tout point $s\in S$,  $X_s$ est r\'eduit et que $G$ un $S$-sch\'ema en groupes fini et plat. Soient $Y\to X$ un $G$-torseur et $H'$ un sous-sch\'ema ferm\'e en groupes de $G_{\eta}$. On suppose que $Y_{\eta}\to X_{\eta}$ admet une r\'eduction $Z\subset Y_\eta$ de sch\'ema en groupes structural \`a $H'$. Alors le $G$-torseur $Y \to X$ admet lui-m\^eme une r\'eduction $\overline Z \subset Y$ de sch\'ema en groupes structural \`a $\overline {H'}$, o\`u $\overline Z$ (resp. $\overline {H'}$) d\'esigne la cl\^oture sch\'ematique de $Z$ dans $Y$ (resp. de $H'$ dans $G$).

\end{prop}

\proof
La section $s : X_\eta \to Y_\eta /H'$ qui est une immersion ferm\'ee s'\'etend en une immersion ferm\'ee $s' :X' \to Y/\overline {H'}$ o\`u $X'$ d\'esigne la cl\^oture sch\'ematique de $X_\eta $ dans $Y / \overline {H'}$.

Le morphisme $u: X' \to X$, compos\'e du morphisme fini $Y/\overline {H'} \to X$ et de l'immersion ferm\'ee $s'$, est fini. La restriction \`a la fibre g\'en\'erique est un isomorphisme $u_\eta : X'_\eta \simeq X_\eta$. L'image $u(X')$ est un ferm\'e contenant $X_\eta $ qui est dense dans $X$, et donc $u$ est surjectif.
 On peut donc appliquer le point \ref{3} du lemme \ref{technique}, pour conclure que $u$ est un isomorphisme et donc l'existence d'une section $X \to Y/\overline {H'}$.

 De plus $\overline Z \subset Y$ est naturellement muni d'une action de $\overline {H'} $ compatible avec l'action de $G$ sur $Y$, provenant de l'action de $H'$ sur $Z$ compatible avec celle de $G_\eta $ sur $Y_\eta$, et des propri\'et\'es fonctorielles de la cl\^oture sch\'ematique. On en d\'eduit une immersion ferm\'ee $\overline Z /\overline {H'} \subset Y/\overline {H'}$, qui restreinte \`a la fibre g\'en\'erique co\" \i ncide avec $Z/H' =X_\eta \subset Y_\eta /H'$. Par ailleurs $\overline Z/ \overline {H'}$ est plat sur $S$, ce qui assure que $\overline Z/ \overline {H'} \subset Y/ \overline {H'}$ est la cl\^oture sch\'ematique $X' \simeq X$ de $X_\eta $ dans $Y/ \overline {H'}$. On aboutit au diagramme commutatif suivant (automatiquement cart\'esien) :
 $$\xymatrix{
 \overline Z \ar[r]\ar[d] &Y \ar[d] \\
 \overline Z /\overline {H'} =X \ar[r]& Y /\overline {H'}\\
 }$$
 o\`u les fl\`eches horizontales sont des immersions ferm\'ees, celle du haut \'etant $\overline {H'}$-\'equivariante. \endproof

\subsection{Le cas normal}
\label{Normal}
Dans cette section, inspir\'ee par \cite[Chapter II]{Nor2}, on \'etablira des \'enonc\'es similaires \`a la proposition \ref{lem22} sous des hypoth\`eses de normalit\'e.

\subsubsection{Le cas $G \to S$ fini}
\label{sezFINI}

Nous aurons besoin du lemme technique suivant.

\begin{lem}\label{technique2} Consid\'erons un carr\'e cart\'esien de sch\'emas
$$\xymatrix{
V \ar[r]^v \ar[d]_j &Z \ar[d]^{j'} \\
U \ar[r]_u &X\\
}$$
o\`u les fl\`eches horizontales sont des immersions ouvertes, $u(U)$ \'etant dense dans $X$, et les fl\`eches verticales des morphismes finis. On suppose $Z$ localement noeth\'erien et $X$ int\`egre et normal en tout point de $X \setminus u(U)$. Alors s'il existe une section $s:U \to V$ de $j$, il existe une section $s': X \to Z$ de $j'$ \'etendant la section $s$ et rendant le diagramme \'evident cart\'esien.
\end{lem}

\proof Posons $t=v \circ s$. C'est une immersion, qui est quasi-compacte d'apr\`es \cite[Lemma 27.5.3, Tag 01OX]{ST}. Il s'ensuit que $t$ est la compos\'ee $t=b\circ a$ de l'immersion ouverte $a: U \to \overline{t(U)}$ et de l'immersion ferm\'ee $b: \overline{t(U)} \hookrightarrow Z$ (\cite[Lemma 28.3.2, Tag 01QV]{ST}), o\`u $\overline{t(U)}$ est r\'eduit et irr\'eductible, car $U$ est int\`egre, et donc int\`egre. On pose $w=j' \circ b$ qui est fini et rend commutatif le diagramme
$$\xymatrix{
U \ar[r]^a \ar[d]_= & \overline {t(U)}\ar[d]^w \\
U \ar[r] &X\\
}.$$ L'existence de la section $s'$ est la cons\'equence du point 1 du lemme \ref{technique}. Le diagramme suivant est commutatif
$$\xymatrix{
U \ar[r]^s \ar[d]_u & V \ar[r]^j \ar[d]^v &U \ar[d]^u\\
X \ar[r]_{s'} & Y \ar[r]_{j'} &X\\
}.$$ Le grand rectangle et le carr\'e de droite sont cart\'esiens. Il s'ensuit que le carr\'e de gauche est cart\'esien.
\endproof

\begin{prop}\label{lemCARFIN} Soit $f:X\to S$ un $S$-sch\'ema fid\`element plat localement de type fini et int\`egre. Soit $U$ un ouvert dense de $X$, $S'=f(U)\subset S$ et supposons que $X$ soit normal en tous les points en dehors de $U$. Soit $G$ un $S$-sch\'ema en groupes fini et fid\`element plat et $Y \to X$ un $G$-torseur. Si la restriction \`a $U$ de $Y \to X$ admet une r\'eduction $T \subset Y_U$ de sch\'ema en groupes structural de $G_{S'}$ \`a $H'$ o\`u $H' \subset G_{S'}$ est un sous-sch\'ema en groupes ferm\'e fid\`element plat sur $S'$, le $G$-torseur $Y\to X$ admet lui-m\^eme une r\'eduction $\overline {T_\eta } \subset Y$ de sch\'ema en groupes structural de $G$ \`a $ \overline {H'_\eta }$, o\`u $\overline {T_\eta }$ (resp. $\overline {H' _\eta }$) d\'esigne la cl\^oture sch\'ematique de $T_\eta $ (resp. $H'_\eta $) dans $Y$ (resp. dans $G$).\end{prop}

\proof On remarque d'abord que par l'unicit\'e de la cl\^oture sch\'ematique, du fait que $H' \to S'$ est plat, la restriction \`a $S'$ de $\overline{ H'_\eta }$ est $H'$. Il y a une action de $(\overline {H'_\eta} )_X$ sur $Y$ compatible avec l'action de $G_X$ et qui, tir\'e en arri\`ere sur $U$, donne l'action de $H'$ sur $Y_U$. La compatibilit\'e du quotient et du changement de base entra\^ine que le diagramme suivant est cart\'esien
$$\xymatrix{
Y_U /H' \ar[r]^v \ar[d]_j &Y/\overline {H'_\eta }\ar[d]^{j'} \\
U \ar[r]_u &X\\
}.$$
Le lemme \ref{technique2} prouve l'existence d'une section de $j'$. On conclut la preuve comme celle de la proposition \ref{lem22}. \endproof

\subsubsection{Le cas $G \to S$ quasi-fini}

On suppose ici que $G\to S$ est affine et quasi-fini.

\begin{prop}\label{lemQF} Soit $f:X\to S$ un $S$-sch\'ema s\'epar\'e, fid\`element plat et localement de type fini. Soit $U$ un ouvert dense de $X$, $S'=f(U)\subset S$ et supposons que pour tout point $z\in S$, la fibre
$X_z$ soit int\`egre et normale (cf. la remarque \ref{remNorma}). Soit $G$ un $S$-sch\'ema en groupes affine et quasi-fini et plat. On se donne un $G$-torseur $Y \to X$. Supposons que la restriction \`a $U$ de $Y \to X$ admet une r\'eduction de sch\'ema en groupes structural \`a $H'$, o\`u $H' \subset G_{S'}$ est un sous-sch\'ema ferm\'e en groupes quasi-fini et fid\`element plat sur $S'$, que l'on notera $T \to U$.

Alors le $G$-torseur $Y \to X$ lui-m\^eme admet une r\'eduction $\overline {T_\eta } \subset Y$ de sch\'ema en groupes structural \`a $\overline {H'_\eta } \subset G$, o\`u $\overline {T_\eta } $ (resp. $\overline {H'_\eta }$) d\'esigne la cl\^oture sch\'ematique de $T_\eta $ (resp. $H'_\eta $ ) dans $Y$ (resp. $G$), si l'une des deux conditions suivantes est v\'erifi\'ee:

\begin{enumerate}
\item $S=S'$;
\item pour tout point ferm\'e $s \in S$, la restriction \`a $\Spec \dO _{S,s}$ de la compos\'ee $T_{\eta} \to Y_{\eta}\to  Y$ n'est pas une immersion ferm\'ee.
\end{enumerate}
 \end{prop}

\begin{proof}

D'apr\`es le th\'eor\`eme \ref{teoQuozReno} les quotients par des sch\'emas en groupes quasi-finis et plats existent ; on consid\`ere
le diagramme suivant:

 \begin{equation}\label{eqFIG1}
\xymatrix{Y_U/H'\ar[dd]_{j^{''}_U}\ar[rr]^u & & Y/\overline{H'_\eta }\ar[dd]^{j^{''}} \\
 & X'\ar[rd]^w\ar@{^{(}->}[ru]^b & \\
 U\ar@/_/@{-->}[uu]_s \ar[ru]^a \ar[rr]_i & & X }
  \end{equation}
 o\`u $u,a,i$ sont immersions ouvertes, $s,b$ sont des immersions ferm\'ees, $j''$ est quasi-fini et affine d'apr\`es le corollaire \ref{quotient-de-torseur}, $t:=u\circ s$ et $X':= \overline{t(U)}$ l'image sch\'ematique (muni de la structure de sch\'ema r\'eduit). Le sch\'ema $X'$ est r\'eduit et irr\'eductible, donc int\`egre. Le morphisme $w$ est affine et donc s\'epar\'e. Il ressort de la remarque \ref{technique-plus} que $w$ est une immersion ouverte.

Si $S=S'$, pour chaque point $s \in S$, $(Y/\overline{H'_\eta })_s\simeq Y_s/(\overline{H'_\eta })_s$ est un quotient par un sch\'ema en groupes fini, et d'apr\`es le corollaire \ref{quotient-de-torseur}, ${j''}_s$ et donc $w_s$ est fini. Les hypoth\`eses du point \ref{4} du lemme \ref{technique} sont v\'erifi\'ees, $w$ est un isomorphisme. Ceci prouve la proposition sous l'hypoth\`ese (1).

Si $S' \not=S$, d'apr\`es la premi\`ere partie de l'\'enonc\'e, on est dans la situation suivante : on dispose d'une section $s : X_{S'} \to Y_{S'} /H' $ du morphisme $Y_{S'} /H'  \to X_{S'}$, o\`u $H' \subset G_{S'}$ est un sous-sch\'ema ferm\'e en groupes quasi-fini et plat sur $S'$. Par unicit\'e, il suffit d'\'etendre cette section au dessus du spectre de l'anneau local de $S$ en tout $s \in S\setminus S'$. On est donc ramen\'e au cas o\`u $S = \mathrm{Spec} (R)$, $R$ \'etant un anneau de valuation discr\`ete, et $S' =\{ \eta  \} $, $U=X_{\eta}$, $T=T_{\eta}$ et $H'=H'_{\eta}$. On observe d'abord que $f\circ w(X')=S$. En effet le diagramme suivant est cart\'esien :

 $$\xymatrix{
 T \ar[r] \ar[d] &Y_{\eta} \ar[d] \ar[r] &Y \ar[d]\\
 T /H'' = X_\eta \ar[r]^s & Y_\eta /H' \ar[r]^u &Y/\overline {H'}  \\
 }$$
et le morphisme $T \to Y_{\eta}\to  Y$ est une immersion ferm\'ee si et seulement si $X_{\eta} \to Y_\eta /H' \to  Y/ \overline{H'}$ l'est aussi. Par d\'efinition ce dernier se factorise en   $X_{\eta} \to X' \to  Y/ \overline{H'}$. Puisque l'on a suppos\'e que $T \to Y_{\eta}\to  Y$ n'est pas une immersion ferm\'ee, comme $X' \to  Y/ \overline{H'_{\eta}}$ est une immersion ferm\'ee par d\'efinition, alors $a:X_{\eta} \to X'$ n'est pas une immersion ferm\'ee, et en particulier n'est pas un isomorphisme. Si l'image $f\circ w (X')$ \'etait $S'$, $w$ prendrait ses valeurs dans $X_\eta$, et $w\circ a$ serait l'identit\'e de $X_\eta$. Comme $w$ et $a$ sont des immersions ouvertes, cela impliquerait que ce sont des isomorphismes, ce qui est impossible. Donc $X'$ se surjecte sur $S$.

On est dans la situation suivante :
\begin{equation}\label{eqFIG5}
\xymatrix{ (Y/\overline{H' })_{X'}\ar[d]\ar[r] & Y/\overline{H' }\ar[dd]^{j^{''}} \\
  X'\ar[rd]^w\ar[ru]^b & \\
 & X }
 \end{equation}
 d'o\`u l'existence d'une section $b'$ de la premi\`ere fl\`eche verticale. Or, $(Y/\overline{H' })_{X'}\simeq Y_{X'}/\overline{H' }$ (\cite[Expos\'e IV, 3.4.3.1]{SGA}) o\`u $Y_{X'} \to X'$  est un $G$-torseur. Le $G$-torseur $Y_{X'} \to X'$ admet une r\'eduction de sch\'ema en groupes structural de $G$ \`a $\overline{H'}$, et d'apr\`es le point (1) de la proposition, il en est de m\^eme pour le $G$-torseur $Y \to X$ lui-m\^eme. Comme dans la preuve de la proposition \ref{lem22}, on conclut que la r\'eduction du sch\'ema en groupes structural de $G$ \`a $\overline {H'}$ de $Y \to X$ est $\overline {T_\eta } \subset Y$.

Il est clair par ailleurs que, comme la rectriction \`a $U$ de la section $b'$ est la section $s$ de d\'epart, la restriction \`a $U$ de la section $X \to Y/\overline{H' }$ \`a $U$ est aussi $s$. Cela signifie que la restriction \`a $U$ de la r\'eduction du sch\'ema en groupes structural de $G$ \`a $\overline {H'}$ obtenue est isomorphe \`a la donn\'ee de d\'epart $T \to U$. \end{proof}

 \begin{rem}\label{exMAU} Voici un exemple, qui explique pourquoi, dans la proposition \ref{lemQF}, on a introduit l'hypoth\`ese  que $T_\eta \hookrightarrow Y_\eta  \hookrightarrow Y $ n'est pas une immersion ferm\'ee. En g\'en\'eral l'adh\'erence sch\'ematique du torseur $T$ n'est pas forc\'ement un torseur sous l'adh\'erence sch\'ematique de $H'$ : on prend $X=S= \Spec (R)$ le spectre d'un anneau de valuation discr\`ete de caract\'eristique positive $p$, d'uniformisante $\pi $, $G= \Spec R[X]/(\pi ^{p-1} X^p -X)$, $H'$ r\'eduit \`a l'\'el\'ement neutre de $G_\eta $ et $T= \{ x=1/\pi \}$. Le sous-sch\'ema ferm\'e $T$ de $G_\eta $ est ferm\'e dans $G$ (pas de sp\'ecialisation en dehors de lui-m\^eme). Et $T$ n'est pas un torseur sous $\overline {H'}$ qui est r\'eduit \`a l'\'el\'ement neutre de $G$.
\end{rem}

Voici une version point\'ee de la proposition \ref{lemQF}.

\begin{prop}\label{lemQFpoint} Les hypoth\`eses sont celles de la proposition \ref{lemQF}. On suppose que $X$ est point\'e par $x\in X(S)$ et $Y$ par $y\in Y_x(S)$, et que $y_{S'}\in T(S')$. Alors $Y \to X$ admet une r\'eduction de sch\'ema en groupes structural de $G$ \`a $\overline {H'_\eta}$, $\overline {T_\eta } \to X$, point\'ee en $y$, et dont la restriction \`a $U$ est isomorphe \`a $T\to U$.
\end{prop}

\begin{proof} Il suffit de montrer que dans ces conditions, si $S' \not=S$, la seconde condition de la proposition \ref{lemQF} est v\'erifi\'ee. Apr\`es localisation en un point ferm\'e $s \in S$, $y_s\notin T_{\eta} $ et $y_s$ est dans l'adh\'erence de $T_\eta $ dans $Y$. Donc $T_\eta \hookrightarrow Y_{\eta}\to  Y$ n'est pas une immersion ferm\'ee. \end{proof}

\begin{cor}\label{generique} Les \'enonc\'es pr\'ec\'edents (propositions \ref{lemCARFIN} et \ref{lemQFpoint}) restent valables si l'on remplace l'ouvert $U$ par la fibre g\'en\'erique $X_\eta $.
\end{cor}
\proof
En effet, les r\'eductions des sch\'emas en groupes structuraux reviennent \`a montrer l'existence d'une section du morphisme naturel $Y/\overline{H'_\eta}\to X$, tout en sachant que cette section existe sur la fibre g\'en\'erique. Le lemme \ref{appendice} permet d'\'etendre cette section au dessus d'un ouvert de $S$ et de se ramener aux hypoth\`eses des propositions \ref{lemCARFIN} et \ref{lemQFpoint}.
\endproof

\begin{lem}\label{appendice} Soit $S$ un sch\'ema int\`egre et $\pi _1 : Z_1\to S$ et $\pi _2 : Z_2 \to S$ deux $S$-sch\'emas localement de pr\'esentation finie et s\'epar\'es. On suppose de plus que $Z_1$ et $Z_2$ sont quasi-compacts et que $\pi _2$ est ouvert. Soit $a: Z_1 \to Z_2$ un $S$-morphisme affine.

Si la restriction $a_\eta $ de $a$ \`a la fibre g\'en\'erique admet une section $s_\eta $, il existe un ouvert non vide $V$ de $S$ et une unique section $s_V$ de $a_V : Z_1 \times _S V \to Z_2 \times _S V$ dont la restriction \`a la fibre g\'en\'erique est $s_\eta $.
\end{lem}
\proof Le th\'eor\`eme 8.8.2 de \cite{EGAIV-3} s'applique \`a notre situation et assure l'existence d'un ouvert $V\subset S$ et d'un morphisme $s_V : Z_{2,V} \to Z_{1,V}$ prolongeant $s_\eta $. Comme $\pi _2$ est s\'epar\'e, $s_V$ est une section de $a_V$. L'unicit\'e provient de la s\'eparation de $\pi _1$ et du fait que la fibre g\'en\'erique est dense dans $Z_2$.
\endproof

\section{Le sch\'ema en groupes fondamental}\label{fondamental}
\label{sezSGF}
 L'objectif est de d\'emontrer qu'une certaine cat\'egorie de torseurs point\'es sur un certain $S$-sch\'ema de base $X\to S$ sous certains $S$-sch\'emas en groupes est cofiltr\'ee. On rappelle les conditions portant sur un $S$-sch\'ema $X$ connexe qu'on sera amen\'e \`a consid\'erer :
 \smallskip

(A) $X\to S$ est localement de type fini, s\'epar\'e, fid\`element plat, et pour tout point $s\in S$, $X_s$ est r\'eduit ;
\smallskip

(B) $X\to S$ est localement de type fini, s\'epar\'e, fid\`element plat, int\`egre et normal ;
\smallskip

(C) $X\to S$ est localement de type fini, s\'epar\'e, fid\`element plat, et pour tout point $s\in S$, $X_s$ est int\`egre et normal.
\medskip

\begin{rem}\label{remNorma} L'hypoth\`ese (C) implique l'hypoth\`ese (B): du fait que la fibre g\'en\'erique $X_{\eta}$ est int\`egre on d\'eduit ais\'ement l'int\'egrit\'e de $X$. Ensuite la normalit\'e de $S$ et de chaque fibre entra\^ine la normalit\'e de $X$ (cf. \cite[Theorem 23.9]{Mat} et son corollaire).
\end{rem}
\medskip

R\'esumons les r\'esultats obtenus dans la section \ref{reduction}. On suppose que $X \to S$ satisfait (A) ou (B) (resp. (C)). Soient $G \to S$ un sch\'ema en groupes plat et fini (resp. affine et quasi-fini) et $Y \to X$ un $G$-torseur. Si la fibre g\'en\'erique $Y_\eta \to X_\eta $ admet une r\'eduction \`a un $H'$-torseur $Z' \subset Y_\eta $, o\`u $H' \subset G_\eta $ est un sous-sch\'ema en groupes ferm\'e, alors la cl\^oture sch\'ematique $\overline {H'} $ de $H' $ dans $G$ est un sous-sch\'ema en groupes ferm\'e, plat et fini (resp. affine et quasi-fini), et la cl\^oture sch\'ematique $\overline {Z'} \subset Y$ est une r\'eduction du $G$-torseur $Y \to X$ \`a un $H'$-torseur. De plus, si les torseurs sont point\'es, il en est de m\^eme de leur r\'eduction

\medskip

Si $X\to S$ est un morphisme fid\`element plat et $x\in X(S)$ est un point fix\'e, alors on d\'efinit, dans l'esprit de \cite{GAS},
la cat\'egorie  $\mathcal{P}(X)$ (resp. $\mathcal{Q}f(X)$)  des torseurs sous l'action d'un sch\'ema en groupes fini (resp. quasi-fini) et plat,
point\'es au dessus de $x$. Chaque objet est donc un triplet $(Y,G,y)$ o\`u $G$ est un $S$-sch\'ema en groupes fini (resp. quasi-fini) et plat,
$Y$ est un $G$-torseur point\'e en $y\in Y_x(S)$.  Un morphisme de triplets $\varphi: (Y,G,y)\to (Z,H, z)$ est la donn\'ee d'un morphisme de sch\'emas en groupes $G\to H$ et un morphisme de $X$- sch\'emas point\'es $Y\to Z$ tels que le diagramme suivant soit commutatif
$$\xymatrix{G\times Y\ar[d]\ar[r]&Y\ar[d]\\
H\times Z\ar[r] & Z.}$$
On montre le th\'eor\`eme suivant:

\begin{thm}\label{thmSGF}
Soit  $X$ un sch\'ema connexe, localement de type fini et fid\`element plat sur $S$. Soit $x\in X(S)$ un point.
\begin{enumerate}
\item si l'une des conditions (A) ou (B) est v\'erifi\'ee, la cat\'egorie $\mathcal{P}(X)$ est cofiltr\'ee ; de plus dans $\mathcal{P}(X)$ les produits fibr\'es finis existent ;
\item si la condition (C) est v\'erifi\'ee, la cat\'egorie  $\mathcal{Q}f(X)$ est cofiltr\'ee ; de plus dans $\mathcal{Q}f(X)$ les produits fibr\'es finis existent ;
\item sous l'hypoth\`ese (C), le foncteur naturel $\mathcal{P}(X)\hookrightarrow \mathcal{Q}f(X)$ est pleinement fid\`ele et commute aux produits fibr\'es finis.

\end{enumerate}
\end{thm}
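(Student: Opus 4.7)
Le plan consiste \`a v\'erifier que les cat\'egories consid\'er\'ees admettent un objet terminal, les produits binaires et les \'egalisateurs de paires parall\`eles, ce qui assure \`a la fois la cofiltration et l'existence des produits fibr\'es finis. L'objet terminal est $(X,\{1\},x)$, trivialement pr\'esent dans $\mathcal{P}(X)$ comme dans $\mathcal{Q}f(X)$. Pour deux objets $(Y_1,G_1,y_1)$ et $(Y_2,G_2,y_2)$, le produit $(Y_1\times_X Y_2,\, G_1\times_S G_2,\, (y_1,y_2))$ est un torseur point\'e muni de morphismes canoniques vers les deux facteurs, ce qui fournit d\'ej\`a le pr\'ed\'ecesseur commun requis dans les deux cat\'egories.

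Le c\oe{}ur de l'argument est le traitement des paires parall\`eles $f,g:(Y,G,y)\to (Z,H,z)$. On se restreint \`a la fibre g\'en\'erique $X_\eta$, sch\'ema sur le corps $K=k(\eta)$; l\`a, la cat\'egorie des torseurs point\'es finis (resp.\ quasi-finis) est cofiltr\'ee par le th\'eor\`eme de Nori (resp.\ son analogue naturel quasi-fini, qui suit les m\^emes lignes). Cela fournit un morphisme de torseurs point\'es $h_\eta:(W_\eta,K_\eta,w_\eta)\to(Y_\eta,G_\eta,y_\eta)$ tel que $f_\eta h_\eta = g_\eta h_\eta$, et quitte \`a passer \`a un sous-torseur on peut supposer que $h_\eta$ est une immersion ferm\'ee \'equivariante pour $K_\eta\hookrightarrow G_\eta$, \emph{i.e.} $W_\eta$ est un sous-torseur de $Y_\eta$ passant par $y_\eta$. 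On invoque alors le r\'esultat d'extension appropri\'e: la Proposition~\ref{lem22def} sous l'hypoth\`ese (A), la Proposition~\ref{lemCARFIN} sous (B) avec $U=X_\eta$, ou la Proposition~\ref{lemQFpoint} sous (C) dans le cadre quasi-fini. Chacune produit un sous-torseur $W\hookrightarrow Y$ sous $\overline{K_\eta}$, l'adh\'erence sch\'ematique de $K_\eta$ dans $G$, point\'e en $y$ et dont la restriction \`a la fibre g\'en\'erique est $W_\eta$. Comme $Z\to X$ est s\'epar\'e (\'etant affine) et $W$ plat sur $S$ de fibre g\'en\'erique dense, l'\'egalit\'e $f|_W = g|_W$ valable g\'en\'eriquement s'\'etend \`a $W$ tout entier, fournissant ainsi l'\'egalisateur cherch\'e.

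Les produits fibr\'es finis se construisent par la m\^eme strat\'egie appliqu\'ee \`a un diagramme en V $(Y_1,G_1,y_1)\to(Z,K,z)\leftarrow(Y_2,G_2,y_2)$: on part de $Y_1\times_X Y_2$, on identifie sur la fibre g\'en\'erique le sous-torseur dont l'image dans $Z_\eta\times_{X_\eta}Z_\eta$ se factorise par la diagonale, puis on l'\'etend \`a $X$ par les m\^emes propositions. Pour le point (3), l'inclusion $\mathcal{P}(X)\hookrightarrow\mathcal{Q}f(X)$ est pleine par construction, les morphismes de torseurs \'etant de m\^eme nature dans les deux cat\'egories; la compatibilit\'e avec les produits fibr\'es finis r\'esulte de ce que le sous-groupe $\overline{K_\eta}$ produit par l'argument pr\'ec\'edent est un sous-sch\'ema en groupes ferm\'e de $G_1\times_S G_2$, donc fini d\`es que les groupes ambiants le sont.

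L'obstacle principal sera (i) la v\'erification que la fibre g\'en\'erique poss\`ede la propri\'et\'e de cofiltration requise, notamment dans le cas quasi-fini o\`u il faut \'etendre l'argument classique de Nori, et (ii) s'assurer que la structure point\'ee est pr\'eserv\'ee tout au long de la proc\'edure d'extension --- c'est pr\'ecis\'ement pour cette raison que la Proposition~\ref{lemQFpoint} est \'enonc\'ee sous forme point\'ee. Le reste du raisonnement se r\'eduit \`a un contr\^ole soigneux de la compatibilit\'e des constructions avec les morphismes de la paire parall\`ele et \`a la v\'erification que la section induite du quotient $Y/\overline{K_\eta}\to X$ est bien ferm\'ee, ce qui d\'ecoule de la s\'eparation de ce dernier.
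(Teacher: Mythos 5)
Your proposal is correct and follows essentially the same route as the paper: reduce to the generic fibre, invoke Nori's theorem there, and extend the resulting sub-torsor to $X$ by schematic closure via Propositions~\ref{lem22def}, \ref{lemCARFIN} and \ref{lemQFpoint}, checking the required commutativities by flatness and separatedness. The only (immaterial) difference is organisational: you verify cofilteredness through terminal object, binary products and equalizers, whereas the paper directly constructs the fibred product of a cospan $(Y_1)\to(Y_0)\leftarrow(Y_2)$ as the closure of the generic fibred product inside $Y_1\times_X Y_2$, which subsumes both steps.
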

\proof
(1). On pr\'esente ici la preuve sous l'hypoth\`ese (A); sous l'hypoth\`ese (B) la preuve est similaire \`a la lumi\`ere des r\'esultats obtenus \`a la section \ref{sezFINI}. Puisque $\mathcal{P}(X)$ a un objet final (\`a savoir $(X,\{1\}_S, x)$) et les produits fibr\'es sur $X$ existent dans $\mathcal{P}(X)$,
il suffit de d\'emontrer que  pour trois objets quelconques $(Y_i, G_i, y_i), i=0, 1 , 2$ de $\mathcal{P}(X)$ et deux morphismes
$\varphi_i:(Y_i, G_i, y_i)\to (Y_0, G_0, y_0), i=1, 2$, il existe un quatri\`eme objet $(Y_3, G_3, y_3)$ et deux morphismes
$\psi_i:(Y_3, G_3, y_3)\to (Y_i, G_i, y_i), i=1, 2$ qui cl\^oturent le carr\'e. Soient $X_{\eta}$ et
$(Y_{i,\eta}, G_{i,\eta}, y_{i,\eta}), i=0, 1, 2$ les fibres g\'en\'eriques de $X$ et $(Y_i, G_i, y_i)$ respectivement. Le sch\'ema $X_\eta $ \'etant connexe, r\'eduit et muni d'un point rationnel, d'apr\`es
\cite[Chapter II, Propositions 1 and 2]{Nor2}, $Y_{1,\eta}\times_{Y_{0,\eta}}Y_{2,\eta}$ est un $G_{1,\eta}\times_{G_{0,\eta}}G_{2,\eta}$-torseur au dessus de $X_{\eta}$
(point\'e en $y_{1,\eta}\times_{y_{0,\eta}}y_{2,\eta}$).
D'apr\`es la proposition \ref{lem22}, on construit le $G_3$-torseur $Y_3\hookrightarrow Y_1 \times _X Y_2$ en prenant la cl\^oture du
$G_{1,\eta}\times_{G_{0,\eta}}G_{2,\eta}$-torseur $Y_{1,\eta}\times_{Y_{0,\eta}}Y_{2,\eta}$ dans le $G_1\times_S G_2$-torseur
$Y_1\times_X Y_2$. Les restrictions $\psi _i$ \`a $Y_3$ des projections $Y_1 \times _X Y_2 \to Y_i$, $i=1,2$, v\'erifient
$\varphi _1 \circ \psi _1 = \varphi _2 \circ \psi _2$ par fonctorialit\'e de la cl\^oture sch\'ematique. Finalement le point $y_3$ est, bien s\^ur, l'adh\'erence de $y_{1,\eta}\times_{y_{0,\eta}}y_{2,\eta}$ dans $Y_3$. Il est clair que l'objet $(Y_3, G_3,y_3)$ est le produit fibr\'e dans la cat\'egorie $\mathcal{P}(X)$ de $(Y_1,G_1,y_1)$ et $(Y_2,G_2,y_2)$ au dessus de $(Y_0,G_0,y_0)$. La preuve du point (2) cas quasi-fini est semblable, en utilisant la proposition \ref{lemQFpoint}. Enfin la derni\`ere assertion (3) provient du caract\`ere fonctoriel de la cl\^oture sch\'ematique. \endproof

De la conclusion du th\'eor\`eme \ref{thmSGF} il r\'esulte que les pro-objets des cat\'egories $\mathcal{P}(X)$ et $\mathcal{Q}f(X)$ sont repr\'esentables par des sch\'emas (\cite[prop. 8.2.3]{EGAIV-3}).  On notera $Pro-\mathcal{P}(X)$  et $Pro-\mathcal{Q}f(X)$ ces cat\'egories de pro-objets. On d\'eduit du th\'eor\`eme \ref{thmSGF} l'existence de pro-objets universels : un triplet $(\widehat{X},\pi_1(X,x),\widehat{x})$ (resp. $(\widehat{X}^{\text{qf}},\pi_1(X,x)^{\text{qf}},\widehat{x}^{\text{qf}})$) objet de $Pro-\mathcal{P}(X)$ (resp. objet de $Pro-\mathcal{Q}f(X)$) plats sur $S$, avec la propriet\'e universelle suivante: pour tout triplet $(P,G,x)$ objet de $Pro-\mathcal{P}(X)$ (resp. de $Pro-\mathcal{Q}f(X)$), il existe un unique morphisme $(\widehat{X},\pi_1(X,x),\widehat{x})\to (P,G,x)$ dans  $Pro-\mathcal{P}(X)$ (resp.$(\widehat{X}^{\text{qf}},\pi_1(X,x)^{\text{qf}},\widehat{x}^{\text{qf}})\to (P,G,x)$ dans $Pro-\mathcal{Q}f(X)$). Les objets de $\mathcal{P}(X)$ (resp. $\mathcal{Q}f(X)$) sont en correspondance bijective avec les $S$-morphismes $\pi_1(X,x) \to G$ (resp. $\pi_1(X,x)^{\text{qf}} \to G$) o\`u $G\to S$ est un sch\'ema en groupes fini (resp. quasi-fini) et plat, le torseur correspondant \`a un morphisme  $\varphi :\pi_1(X,x) \to G$ (resp. $\varphi :\pi_1(X,x)^{\text{qf}} \to G$) \'etant le produit contract\'e $\widehat X \wedge ^{\pi _1 (X,x)}G$ (resp. $\widehat X^{qf} \wedge ^{\pi _1 (X,x)}G$) \`a travers $\varphi $.

\begin{defi} Soit $X$ un sch\'ema connexe localement de type fini et fid\`element plat sur $S$.

\begin{enumerate} \item On suppose qu'une des deux conditions (A) ou (B) est v\'erifi\'ee. Alors le triplet  $(\widehat{X},\pi_1(X,x),\widehat{x})$ est appel\'e {\it triplet universel} de  $Pro-\mathcal{P}(X)$, le $S$-sch\'ema en groupes $\pi_1(X,x)$  est appel\'e le {\it sch\'ema en groupes fondamental de $X$}  au point $x$ et $\widehat{X}\to X$ est  appel\'e le {\it $\pi_1(X,x)$-torseur universel}.

\item On suppose la condition (C) v\'erifi\'ee. Alors le triplet  $(\widehat{X}^{\text{qf}},\pi_1(X,x)^{\text{qf}},\widehat{x}^{\text{qf}})$ est appel\'e {\it triplet universel} de  $Pro-\mathcal{Q}f(X)$, le $S$-sch\'ema en groupes $\pi_1(X,x)^{\text{qf}}$  est appel\'e le {\it sch\'ema en groupes fondamental quasi-fini de $X$}  au point $x$ et $\widehat{X}^{\text{qf}}\to X$ est  appel\'e le {\it $\pi_1(X,x)^{\text{qf}}$-torseur universel}.
\end{enumerate}
\end{defi}

 \section{Torseurs galoisiens}\label{galoisien}
Dans ce paragraphe $X$ d\'esigne un sch\'ema connexe localement de type fini et plat sur un sch\'ema de Dedekind $S$, point\'e par une section $x \in X(S)$, qu'on suppose v\'erifier l'une des conditions $(A)$, $(B)$ ou $(C)$.

 \begin{defi}  On dira qu'un objet des cat\'egories $Pro-\dP (X)$ et $ Pro-\dQ f (X)$ est {\it g\'en\'eriquement fini} si sa fibre g\'en\'erique est finie. De m\^eme un $S$-sch\'ema en groupes affine sera dit {\it g\'en\'eriquement fini} si sa fibre g\'en\'erique est finie.

Un objet g\'en\'eriquement fini $(T,M,t)$ de $Pro-\dP (X)$ est dit {\it galoisien} (resp. {\it quasi-galoisien}) si, pour tout morphisme $(g, \varphi ) : (Z,Q,z) \to (T,M,t)$ d'objets g\'en\'eriquement finis de $Pro-\dP (X)$, $\varphi $ est fid\`element plat (resp. sch\'ematiquement dominant). Un objet g\'en\'eriquement fini $(T,M,t)$ de $Pro-\dQ (X)$ est dit {\it qf-galoisien} (resp. {\it quasi-qf-galoisien}) si, pour tout morphisme $(g, \varphi ) : (Z,Q,z) \to (T,M,t)$ d'objets g\'en\'eriquement finis de $Pro-\dQ f (X)$, $\varphi $ est fid\`element plat (resp. sch\'ematiquement dominant).
 \end{defi}

 \begin{lem}\label{factorisationHai} Tout morphisme $\varphi : H \to G$ entre $S$-sch\'emas en groupes affines et plats sur un sch\'ema de Dedekind $S$ admet une factorisation, unique \`a isomorphisme pr\`es,
 $$\xymatrix{
 H \ar[r]^\varphi \ar[d]_\gamma & G\\
 G'' \ar[r]_\beta & G' \ar[u]_\alpha \\
 }$$
 o\`u $G'$ et $G''$ sont des sch\'emas en groupes affines et plats sur $S$, $\alpha$ est une immersion ferm\'ee, $\beta$ un morphisme mod\`ele (i.e. un isomorphisme sur la fibre g\'en\'erique) et $\gamma $ fid\`element plat. Si $ \varphi $ est sch\'ematiquement dominant, $\alpha $ est un isomorphisme. Si $\varphi $ est fid\`element plat, $\alpha $ et $\beta $ sont des isomorphismes.

Si $H$ est quasi-fini (resp. fini), il en est de m\^eme de $G''$. Si $G$ est quasi-fini (resp. fini) il en est de m\^eme de $G'$ et $G''$ est g\'en\'eriquement fini.

 \end{lem}

  \proof On se ram\`ene au cas  du spectre $S = \Spec (R)$ d'un anneau de Dedekind. Le morphisme $\varphi $ correspond \`a un morphisme d'alg\`ebres de Hopf $\tilde \varphi : RG \to RH$, dont l'image est une alg\`ebre de Hopf qu'on \'ecrit sous la forme $RG' \subset RH$. Le morphisme $\alpha :G' \to G$ correspondant \`a la surjection $RG \to RG'$ est une immersion ferm\'ee. On consid\`ere ensuite le satur\'e $KG' \cap RH$ de $RG'$ dans $RH$, qui est aussi une alg\`ebre de Hopf que l'on note $RG''$. Toutes ces alg\`ebres de Hopf sont sans $R$-torsion, donc plates sur $R$. D'apr\`es le th\'eor\`eme 4.1.1 de \cite{Duong-Hai}, le morphisme $\gamma : H \to G''$ correspondant \`a l'inclusion $RG'' \subset RH$ est fid\`element plat. Et par construction $\beta : G'' \to G'$ est un morphisme mod\`ele (en particulier sch\'ematiquement dominant). L'unicit\'e est claire, et le reste de la premi\`ere partie de l'\'enonc\'e s'en d\'eduit.

Si $H$ est quasi-fini, du fait que $H \to G''$ est surjectif, pour v\'erifier que $G''$ est quasi-fini, il suffit de voir que $G'' \to S$ est localement de type fini. Cette propri\'et\'e \'etant locale sur la source (\cite[Lemma 34.25.2, Tag 036O]{ST}), comme $H \to G'' $ est fid\`element plat, et $H \to S$ est localement de type fini, il en est de m\^eme de $G'' \to S$. Le reste de l'\'enonc\'e est clair.  \endproof

 \begin{lem}\label{2} Soient $\varphi : G_1 \to G_2$ et $\psi : G_2 \to G_3$ des morphismes de sch\'emas en groupes affines et plats sur un sch\'ema de Dedekind $S$. Si $\psi \circ \varphi $ est fid\`element plat (resp. sch\'ematiquement dominant), il en est de m\^eme de $\psi$.
 \end{lem}

 \proof On consid\`ere les morphismes d'alg\`ebres de Hopf correspondants : $\tilde \varphi : RG_2 \to RG_1$ et $\tilde \psi : RG_3 \to RG_2$. Par hypoth\`ese, $v= \tilde \varphi \circ \tilde \psi$ est injectif, ce qui se traduit par $\ker (\tilde \varphi ) \cap \im (\tilde \psi )=0$. Le morphisme $\tilde \psi$ est injectif. D'apr\`es le th\'eor\`eme 4.1.1 de \cite{Duong-Hai}, il suffit de montrer que l'image de $RG_3 $ par $\tilde \psi$ est satur\'ee dans $RG_2$. Soit donc $f \in RG_2$, $a \in R\setminus 0$, et $g \in RG_3$ tels que $af= \tilde \psi (g)$. On en d\'eduit $a\tilde \varphi (f)= v(g)$ et il existe donc $g' \in RG_3$ tel que $\tilde \varphi (f)= v(g')= \tilde \varphi \circ \tilde \psi (g')$. On en d\'eduit que $\tilde \varphi ( f-\tilde \psi (g'))=0$ et donc $f-\tilde \psi (g') \in \ker \tilde \varphi $. Finalement $af- \tilde \psi (ag ')= \tilde \psi (g-ag')\in \ker \tilde \varphi \cap \im \tilde \psi$ et donc $af- \tilde \psi (ag ')=0$ dans $RG_2$ qui est sans $R$-torsion. On en d\'eduit que $f= \tilde \psi (g')$ est dans l'image de $\tilde \psi$ ce qu'il fallait d\'emontrer.
 \endproof

 \begin{cor}\label{proj} Le torseur universel $\widehat X_x$ (resp. $\widehat X _x ^{qf}$) est limite projective de torseurs point\'es finis (resp. quasi-finis) quasi-galoisiens (resp. quasi-qf-galoisiens), et limite projective de torseurs g\'en\'eriquement finis galoisiens.
 \end{cor}

\proof On applique le lemme \ref{factorisationHai} \`a $H = \pi _1 (X,x)$ (resp. $H =\pi _1^{qf} (X,x)$), compte tenu du lemme \ref{2}. \endproof

\begin{prop}\label{propSURFIN} Soit $X$ un sch\'ema connexe fid\`element plat sur $S$ point\'e en $x\in X(S)$.  On suppose qu'une des conditions (A) ou (B) est v\'erifi\'ee (resp. (C) est v\'erifi\'ee). Soient $U$ un ouvert non vide de $X$, $S':=f(U)$ un ouvert de $S$; on suppose que $x_{S'}\in U$ et que pour tout point $x\in X\setminus U$, l'anneau local $\dO _x$ est int\'egralement clos. Alors $\pi_1(U,x_{S'})  \twoheadrightarrow  \pi_1(X,x)_{S'}  $ (resp. $\pi_1^{qf}(U,x_{S'})  \twoheadrightarrow  \pi_1^{qf}(X,x)_{S'}  $) est  sch\'ematiquement dominant (c.-\`a-d. le morphisme dual sur les alg\`ebres est injectif). En particulier sous l'hypoth\`ese (A) ou (B),  $\pi_1(X_{\eta},x_{\eta}) \twoheadrightarrow  \pi_1(X,x)_\eta   $ est fid\`element plat et, sous l'hypoth\`ese (C), $\pi_1(X_{\eta},x_{\eta}) \twoheadrightarrow  \pi_1^{qf}(X,x)_\eta   $ et $\pi_1^{qf}(X,x)_\eta  \twoheadrightarrow  \pi_1(X,x)_\eta  $ sont fid\`element plats.
\end{prop}
\proof Compte tenu du corollaire \ref{proj}, on est amen\'e \`a montrer que si $(Y,y) \to (X,x)$ est un $G$-torseur fini (resp. quasi-fini), quasi-galoisien, le morphisme naturel $\widehat U _{x_{S'}} \to Y_U$ (resp. $\widehat U _{x_{S'}}^{qf} \to Y_U$) est sch\'ematiquement dominant. Soit $\varphi :\pi _1( U,x_{S'}) \to G_{S'}$ (resp. $\varphi :\pi _1^{qf}( U,x_{S'}) \to G_{S'}$) le morphisme correspondant au $G_{S'}$-torseur $Y_U \to U$. D'apr\`es le lemme \ref{factorisationHai}, il se factorise en $\varphi = \alpha \circ \lambda $, o\`u $\alpha : G' \to G_{S'}$ est une immersion ferm\'ee et $\lambda $ est sch\'ematiquement dominant. Le torseur $Y_U \to U$ admet donc une r\'eduction de sch\'ema en groupes structural en un $G'$-torseur point\'e $Z \to U$. Il r\'esulte des propositions \ref{lemCARFIN} et \ref{lemQFpoint} que le $G$-torseur $Y \to X$ admet lui-m\^eme une r\'eduction de sch\'ema en groupes structural en un $\overline {G' _\eta}$-torseur, o\`u $\overline {G' _\eta}$ d\'esigne l'adh\'erence sch\'ematique de $G'_\eta $ dans $G$, dont la restriction \`a $U$ est isomorphe \`a $Z \to U$. Le fait que $Y \to U $ est quasi-galoisien entraine que $\overline {G'_\eta   }\simeq G$ et que $\overline Z \simeq Y$ comme $G$-torseurs. Il en r\'esulte que $Z \simeq Y_U$ comme $G_{S'}$-torseurs et que $\alpha $ est un isomorphisme. Donc $\varphi $ est sch\'ematiquement dominant, ce qu'il fallait d\'emontrer. Le reste de l'\'enonc\'e est clair. \endproof

\begin{prop}\label{propFIBGEO1} Soient $X$ et $X'$ deux $S$-sch\'emas connexes localement de type fini et fid\`element plats sur $S$, v\'erifiant une des conditions (A) ou (B). Soit $g:X'\to X$ un $S$-morphisme de sch\'emas tel que $\dO_X\to g_* (\dO_{X'})$ est un isomorphisme. On suppose l'existence d'une section $x'\in X'(S)$ et on note $x:=g(x')$, alors $\pi_1(X',x') \twoheadrightarrow  \pi_1(X,x)   $ est sch\'ematiquement dominant. C'est le cas en particulier si $g : X' \to X$ un $S$-morphisme propre, plat avec fibres g\'eom\'etriquement connexes et r\'eduites.
\end{prop}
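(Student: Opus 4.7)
\proof
Le plan est d'\'etablir l'injectivit\'e du morphisme dual $\dO(\pi_1(X,x)) \to \dO(\pi_1(X',x'))$. Puisque $\dO(\pi_1(X,x))$ est la colimite filtr\'ee des $\dO(G)$ pour les objets $(Y,G,y) \in \mathcal{P}(X)$ dont le morphisme associ\'e $\pi_1(X,x) \to G$ est fid\`element plat, il suffira de v\'erifier, pour chacun de ces triplets, que le morphisme compos\'e $\pi_1(X',x') \to \pi_1(X,x) \to G$ est sch\'ematiquement dominant.

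Je raisonnerai par l'absurde. Supposons ce morphisme compos\'e non sch\'ematiquement dominant et soit $I \subsetneq \dO(G)$ le noyau non nul du morphisme dual $\dO(G) \to \dO(\pi_1(X',x'))$. L'alg\`ebre $\dO(\pi_1(X',x'))$ \'etant colimite filtr\'ee des $\dO(G_\alpha)$ pour $G_\alpha \to S$ fini et plat, elle est plate sur $\dO_S$, donc sans torsion puisque $S$ est de Dedekind. Le quotient $\dO(G)/I$, s'injectant dans $\dO(\pi_1(X',x'))$, h\'erite de cette absence de torsion, et est donc plat sur $\dO_S$. Puisque $I$ est un id\'eal de Hopf, $H := \Spec \dO(G)/I$ est un sous-sch\'ema en groupes ferm\'e, plat et strictement plus petit que $G$, et la compos\'ee $\pi_1(X',x') \to G$ se factorise par $H$ par construction.

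Par la propri\'et\'e universelle du triplet universel dans $\mathcal{P}(X')$, cette factorisation correspond \`a une r\'eduction du torseur point\'e tir\'e $(Y_{X'}, G, y_{X'})$ \`a un sous-$H$-torseur $Z' \hookrightarrow Y_{X'}$ contenant le point $y_{X'}$. J'invoquerai alors le Lemme \ref{lemLEM22}, dont les hypoth\`eses sont exactement remplies gr\^ace \`a l'isomorphisme $\dO_X \simeq g_* \dO_{X'}$ donn\'e dans l'\'enonc\'e, pour obtenir un sous-$H$-torseur $Z \hookrightarrow Y$. En retra\c{c}ant la construction du lemme (section $s : X' \to Y_{X'}/H$ associ\'ee \`a $Z'$, sa compos\'ee $t = u \circ s : X' \to Y/H$, puis la factorisation $t' : X \to Y/H$ obtenue gr\^ace \`a l'hypoth\`ese sur $g$), on constate que $t'(x) = t(x')$ co\"\i ncide avec l'image de $y$ dans $Y/H$, d'o\`u $y \in Z(S)$. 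Ainsi $(Z,H,y) \in \mathcal{P}(X)$ serait un sous-objet de $(Y,G,y)$, ce qui fournirait une factorisation $\pi_1(X,x) \to H \hookrightarrow G$ de $\pi_1(X,x) \to G$ par un sous-groupe propre, en contradiction avec son caract\`ere fid\`element plat.

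L'obstacle principal me para\^it \^etre l'\'etablissement de la platitude sur $S$ du sous-sch\'ema en groupes image sch\'ematique $H$ : c'est le point o\`u interviennent de mani\`ere essentielle l'hypoth\`ese que $S$ est un sch\'ema de Dedekind (via l'\'equivalence entre absence de torsion et platitude sur un tel anneau) et la platitude de $\dO(\pi_1(X',x'))$ comme colimite filtr\'ee d'alg\`ebres finies plates. Le reste de l'argument d\'ecoule directement du Lemme \ref{lemLEM22} et d'une v\'erification \'el\'ementaire de la compatibilit\'e du point marqu\'e avec la r\'eduction obtenue.
\endproof
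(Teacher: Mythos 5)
Votre d\'emonstration est correcte et suit essentiellement la voie pr\'evue par le texte : la preuve du papier tient en une ligne (c'est une cons\'equence du Lemme \ref{lemLEM22}) et vous explicitez exactement la r\'eduction attendue --- factorisation de $\pi_1(X',x')\to G$ par un sous-sch\'ema en groupes ferm\'e $H\subsetneq G$, plat car son alg\`ebre est sans torsion sur la base de Dedekind, puis descente du $H$-sous-torseur de $Y_{X'}$ en un $H$-sous-torseur point\'e de $Y$ via le Lemme \ref{lemLEM22}, d'o\`u la contradiction. Seule retouche \`a sugg\'erer : prenez comme syst\`eme cofinal les $G$ pour lesquels $\pi_1(X,x)\to G$ est seulement sch\'ematiquement dominant (la cofinalit\'e du sous-syst\`eme des morphismes fid\`element plats n'est pas imm\'ediate sur une base de Dedekind), ce qui suffit puisqu'un morphisme sch\'ematiquement dominant ne se factorise par aucun sous-sch\'ema ferm\'e propre de $G$.
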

\proof La preuve est la m\^eme que celle de la proposition \ref{propSURFIN} en utilisant le corollaire \ref{lemLEM22} \`a la place des propositions \ref{lemCARFIN} et \ref{lemQFpoint}.\endproof

\section{Appendice : Le contre-exemple de Jilong Tong}
\label{sezJIL}

La construction du sch\'ema en groupes fondamental d'un $S$-sch\'ema dans  \cite{GAS} repose sur l'\'enonc\'e suivant (lemme 2.2 de \cite{GAS}):

\smallskip

{\it Soit $S$ un sch\'ema de Dedekind de point g\'en\'erique $\eta=\Spec(K)$ et $G$ un sch\'ema en groupes fini et plat sur $S$. Soit $H\hookrightarrow G$ un sous sch\'ema en groupes. Soit $X\to S$ un $S$--sch\'ema fid\`element  plat {\it r\'eduit} et irreductible. Soit $P\to X$ un $G$--torseur et $Y\subset P_\eta$ un sous-sch\'ema ferm\'e qui est un $H_\eta$-torseur.  Alors la cl\^oture sch\'ematique $\overline Y$ de $Y$ dans $P$ est un $H$-torseur.}

\smallskip

L'exemple suivant montre que les hypoth\`eses sur $X$ dans l'\'enonc\'e pr\'ec\'edent sont insuffisantes. Soient
$\zeta $ une racine primitive $p$-i\`eme de $1$ ($p$ est un nombre premier), $X = \Spec \bZ [p\zeta ]= \Spec \bZ [V] /(p^{p-1}\Phi _p (V/p))$ o\`u $\Phi _p (T)= {\frac{T^p -1}{T-1}}=T^{p-1} + \dots +1$. Soit $P\to X$ le $\mu _p$-torseur trivial $P:=X \times \mu _p = \Spec \bZ [V,T]/( p^{p-1}\Phi _p (V/p),$ $T^p -1)$. On consid\`ere la fl\`eche diagonale $X_{\bQ}\simeq \mu _p \setminus \{ 1 \}  \to (X \times \mu _p) _{\bQ} = ((\mu _p \setminus \{ 1 \} ) \times \mu _p) _{\bQ}$. C'est un morphisme de torseurs pour le morphisme de sch\'emas en groupes $1 \to \mu _p$. On note $\overline Y$ la cl\^oture sch\'ematique de l'image ; alors $\overline Y= \Spec (A)$ o\`u la $\bZ $-alg\`ebre plate $A$ figure dans le diagramme suivant :

$$\xymatrix{\bZ [V,T] /( p^{p-1}\Phi _p (V/p), T^p -1) \ar[r] & A= \bZ [W] /(\Phi _p (W)) \\
\bZ [V]/ (p^{p-1} \Phi _p (V/p))\ar[u] \ar[ur]&\\
}$$
$$T \to W \quad , \quad V \to pW.$$
La fibre en $p$ de $\overline Y \to X$ correspond au morphisme d'alg\`ebres
$$\xymatrix{\bF _p [V]/ (V^{p-1})\ar[r] & \bF _p [W] /(\Phi _p (W))\\
}$$
$$V \to pW=0$$
qui se factorise \`a travers  $$\bF _p [V]/ (V^{p-1})\to \bF _p \to \bF _p [W] /(\Phi _p (W))$$ et n'est donc pas plat. Donc $\overline Y \to X$ n'est pas un torseur.


\bibliographystyle{alpha}
\begin{thebibliographyfr}{99}
\bibitem[Ana73]{Ana} S. Anantharaman, \emph{Sch\'emas en groupes, espaces homog\`enes et espaces alg\'ebriques
sur une base de dimension 1}. Dans: Sur les groupes alg\'ebriques, pp. 5-79, Bull. Soc. Math. France, M\'em. 33, Soc. Math.
France, Paris, 1973.

\bibitem[AM69]{ArMa} M. Artin et B. Mazur, \emph{Etale homotopy}, Lecture Notes in Mathematics, vol. 100, Springer-Verlag, Berlin-New York, 1969.

\bibitem[DG70]{DG}
M. Demazure et P. Gabriel, \emph{Groupes alg\'ebriques. Tome I: G\'eom\'etrie alg\'ebrique, g\'en\'eralit\'es, groupes commutatifs.}
Avec un appendice Corps de classes local par Michiel Hazewinkel
Masson et Cie, \'Editeur, Paris;  North-Holland Publishing Co., Amsterdam, 1970.

\bibitem[SGA3]{SGA} 
M. Demazure et A. Grothendieck, \emph{Sch\'emas en groupes. I: Propri\'et\'es g\'en\'erales des sch\'emas en groupes}. S\'eminaire de G\'eom\'etrie Alg\'ebrique du Bois Marie 1962/64 (SGA 3). Lecture Notes in Mathematics, vol. 151, Springer-Verlag, Berlin-New York 1970.

\bibitem[DH18]{Duong-Hai} N. D. Duong et P. H. Hai, \emph{Tannakian duality over Dedekind rings and applications}, Math. Z.
{\bf 288} (2018), no. 3-4, 1103--1142.

\bibitem[Gas03]{GAS} C. Gasbarri, \emph{Heights of vector bundles and the fundamental group scheme of a curve}, Duke Math. J. {\bf 117} (2003), no. 2, 287--311.

\bibitem[EGA1]{EGAI} A. Grothendieck, avec la collaboration de J. Dieudonn\'e, \emph{\'El\'ements de  g\'eom\'etrie alg\'ebrique. I. Le langage des sch\'emas.} 
Inst. Hautes \'Etudes Sci. Publ. Math. No. 4 (1960).

\bibitem[EGA4]{EGAIV-3} A. Grothendieck, avec la collaboration de J. Dieudonn\'e, \emph{\'El\'ements de  g\'eom\'etrie alg\'ebrique. IV. \'Etude locale des sch\'emas et des morphismes
de sch\'emas III.} Inst. Hautes \'Etudes Sci. Publ. Math. No. 28 (1966).

\bibitem[Mat89]{Mat} H. Matsumura, \emph{Commutative ring theory}, Cambridge Studies in Advanced Mathematics, vol. 8,
Cambridge University Press, Cambridge, 1989.

\bibitem[MS13]{MS} V. B. Mehta et S. Subramanian, \emph{The fundamental group scheme of a smooth projective variety over a ring of Witt
vectors}, J. Ramanujan Math. Soc. {\bf 28A} (2013), 341--351.

\bibitem[Nor82]{Nor2} M. V. Nori, \emph{The fundamental group-scheme}, Proc. Indian Acad. Sci. Math. Sci. {\bf 91} (1982), no. 2, 73--122.

\bibitem[Ray67]{Ray} M. Raynaud, \emph{Passage au quotient par une relation d'\'equivalence plate.} Dans: Proc. Conf. Local Fields (Driebergen, 1966),
pp. 78--85, Springer, Berlin, 1967.

\bibitem[Ser68]{Ser} J-P. Serre, \emph{Groupe de Grothendieck des sch\'emas en groupes r\'eductifs d\'eploy\'es}, Inst. Hautes \'Etudes Sci. Publ. Math. No. 34 (1968), 37--52.

\bibitem[SP]{ST} The Stack Project Authors, \emph{The Stacks Project}. \href{https://stacks.math.columbia.edu/}{https://stacks.math.columbia.edu/}

\bibitem[WW80]{WWO}  W.C. Waterhouse et B. Weisfeiler, \emph{One-dimensional affine group schemes}, J. Algebra {\bf 66} (1980), 550--568.

\end{thebibliographyfr}

\end{document}